\documentclass[12pt,letterpaper,american,english]{amsart}
\usepackage{charter}
\usepackage{helvet}
\usepackage{courier}
\usepackage[T1]{fontenc}
\usepackage[utf8x]{inputenc}
\usepackage{mathrsfs}
\usepackage{amsthm}
\usepackage{amssymb}
\usepackage[all]{xy}

\makeatletter

\pdfpageheight\paperheight
\pdfpagewidth\paperwidth

\numberwithin{equation}{section}
\numberwithin{figure}{section}
  \theoremstyle{plain}
  \newtheorem*{cor*}{\protect\corollaryname}
\theoremstyle{plain}
\newtheorem{thm}{\protect\theoremname}[section]
  \theoremstyle{definition}
  \newtheorem{defn}[thm]{\protect\definitionname}
  \theoremstyle{remark}
  \newtheorem*{rem*}{\protect\remarkname}
  \theoremstyle{plain}
  \newtheorem{prop}[thm]{\protect\propositionname}
  \theoremstyle{plain}
  \newtheorem*{fact*}{\protect\factname}
  \theoremstyle{plain}
  \newtheorem{cor}[thm]{\protect\corollaryname}

\usepackage{verbatim} 
\usepackage{eucal}
\usepackage{amsthm}
\usepackage[all]{xy}
\usepackage{manfnt}
\usepackage{float}
\usepackage{xr}
\SelectTips{cm}{}
\xyoption{line}
\usepackage{hyperref}
\makeatletter \newcommand{\xyR}[1]{%
\makeatletter \xydef@\xymatrixrowsep@{#1} \makeatother }
\makeatletter \newcommand{\xyC}[1]{%
\makeatletter \xydef@\xymatrixcolsep@{#1} \makeatother }
\emergencystretch=100pt

\mathcode`\:="603A 
\hyphenation{morph-ism}
\hyphenation{homo-morph-ism}
\hyphenation{iso-morph-ism}
\hyphenation{homo-topy}
\hyphenation{homeo-morph-ism}
\hyphenation{endo-morph-ism}
\hyphenation{homo-top-ic}
\hyphenation{null-homo-topy}
\hyphenation{null-homo-top-ic}
\hyphenation{trans-for-ma-tion}
\DeclareSymbolFont{rsfs}{U}{rsfs}{m}{n}
\DeclareSymbolFontAlphabet{\mathrf}{rsfs}

\makeatother

\usepackage{babel}
  \addto\captionsamerican{\renewcommand{\corollaryname}{Corollary}}
  \addto\captionsamerican{\renewcommand{\definitionname}{Definition}}
  \addto\captionsamerican{\renewcommand{\factname}{Fact}}
  \addto\captionsamerican{\renewcommand{\propositionname}{Proposition}}
  \addto\captionsamerican{\renewcommand{\remarkname}{Remark}}
  \addto\captionsamerican{\renewcommand{\theoremname}{Theorem}}
  \addto\captionsenglish{\renewcommand{\corollaryname}{Corollary}}
  \addto\captionsenglish{\renewcommand{\definitionname}{Definition}}
  \addto\captionsenglish{\renewcommand{\factname}{Fact}}
  \addto\captionsenglish{\renewcommand{\propositionname}{Proposition}}
  \addto\captionsenglish{\renewcommand{\remarkname}{Remark}}
  \addto\captionsenglish{\renewcommand{\theoremname}{Theorem}}
  \providecommand{\corollaryname}{Corollary}
  \providecommand{\definitionname}{Definition}
  \providecommand{\factname}{Fact}
  \providecommand{\propositionname}{Proposition}
  \providecommand{\remarkname}{Remark}
\providecommand{\theoremname}{Theorem}

\begin{document}

\title{Steenrod coalgebras III. The fundamental group}

\author{Justin R. Smith}

\subjclass[2000]{Primary 18G55; Secondary 55U40}

\keywords{operads, cofree coalgebras}

\curraddr{Department of Mathematics\\
Drexel University\\
Philadelphia,~PA 19104}

\email{jsmith@drexel.edu}

\urladdr{http://vorpal.math.drexel.edu}

\maketitle
\global\long\def\ring{\mathbb{Z}}
\global\long\def\integers{\mathbb{Z}}
\global\long\def\betabar{\bar{\beta}}
 \global\long\def\desusp{\downarrow}
\global\long\def\susp{\uparrow}
\global\long\def\cobar{\mathcal{F}}
\global\long\def\coend{\mathrm{CoEnd}}
\global\long\def\ainfty{A_{\infty}}
\global\long\def\coassoc{\mathrm{Coassoc}}
\global\long\def\trm{\mathrm{T}}
\global\long\def\tfr{\mathfrak{T}}
\global\long\def\tabbr{\hat{\trm}}
\global\long\def\Tabbr{\hat{\tfr}}
\global\long\def\afr{\mathfrak{A}}
\global\long\def\homz{\mathrm{Hom}_{\ring}}
\global\long\def\zend{\mathrm{End}}
\global\long\def\rs#1{\mathrm{R}S_{#1 }}
\global\long\def\forgetful#1{\lceil#1\rceil}
\global\long\def\highprod#1{\bar{\mu}_{#1 }}
\global\long\def\slength#1{|#1 |}
\global\long\def\barcs{\bar{\mathcal{B}}}
\global\long\def\ubarcs{\mathcal{B}}
\global\long\def\zs#1{\ring S_{#1 }}
\global\long\def\homzs#1{\mathrm{Hom}_{\ring S_{#1 }}}
\global\long\def\zpi{\mathbb{Z}\pi}
\global\long\def\D{\mathfrak{D}}
\global\long\def\ahat{\hat{\mathfrak{A}}}
\global\long\def\cbar{{\bar{C}}}
\global\long\def\cf#1{C(#1 )}
\global\long\def\ddelta{\dot{\Delta}}
\global\long\def\dimlimiter{\triangleright}
\global\long\def\coalgcat{\mathrf S_{0}}
\global\long\def\hcoalgcat{\mathrf{S}}
\global\long\def\ircoalgcat{\mathrf I_{0}}
\global\long\def\bircoalgcat{\mathrf{I}_{0}^{+}}
\global\long\def\hircoalgcat{\mathrf I}
\global\long\def\dcoalgcat{\mathrm{ind}-\coalgcat}
\global\long\def\chaincat{\mathbf{Ch}}
\global\long\def\coll{\mathrm{Coll}}
\global\long\def\bchaincat{\mathbf{Ch}_{0}}
\global\long\def\ilimit{\varprojlim\,}
\global\long\def\bigboxtimes{\mathop{\boxtimes}}
\global\long\def\dlimit{\varinjlim\,}
\global\long\def\coker{\mathrm{{coker}}}
\global\long\def\icoalgcat{\mathrm{pro}-\mathrf{S}_{0}}
\global\long\def\iircoalgcat{\mathrm{pro-}\ircoalgcat}
\global\long\def\dircoalgcat{\mathrm{ind-}\ircoalgcat}
\global\long\def\core#1{\left\langle #1\right\rangle }
\global\long\def\ilimitder{\varprojlim^{1}\,}
\global\long\def\pcoalg#1#2{P_{\mathcal{#1}}(#2) }
\global\long\def\pcoalgf#1#2{P_{\mathcal{#1}}(\forgetful{#2}) }
\global\long\def\coequalizer{\mathop{\mathrm{coequalizer}}}

\global\long\def\mainoperad{\mathcal{H}}
\global\long\def\cone#1{\mathrm{Cone}(#1)}

\global\long\def\im{\operatorname{im}}

\global\long\def\lcell{L_{\mathrm{cell}}}
\global\long\def\ccoalgcat{\mathrf S_{\mathrm{cell}}}

\global\long\def\fc#1{\mathrm{hom}(\bigstar,#1)}
\global\long\def\coS{\mathbf{coS}}
\global\long\def\cocell{\mathbf{co}\ccoalgcat}

\global\long\def\ccoalgcat{\mathrf S_{\mathrm{cell}}}

\global\long\def\spaces{\mathbf{SS}}

\global\long\def\pgam{\tilde{\Gamma}}
\global\long\def\pz{\tilde{\integers}}

\global\long\def\moore#1{\{#1\}}

\global\long\def\ints{\mathbb{Z}}

\global\long\def\finite{\mathcal{F}}

\global\long\def\finiteop{\finite^{\mathrm{op}}}

\global\long\def\syms{\mathbf{SS}}

\global\long\def\ordered{\mathbf{\Delta}}

\global\long\def\sets{\mathbf{Set}}

\global\long\def\colim{\operatorname{colim}}

\newdir{ >}{{}*!/-5pt/@{>}}

\global\long\def\treal#1{\mathcal{T}(\bigstar,#1)}

\global\long\def\rats{\mathbb{Q}}

\global\long\def\img{\operatorname{im}}

\global\long\def\tmap#1{\mathrm{T}_{#1}}

\global\long\def\Tmap#1{\mathfrak{T}_{#1}}

\global\long\def\glist#1#2#3{#1_{#2},\dots,#1_{#3}}

\global\long\def\blist#1#2{\glist{#1}1{#2}}

\global\long\def\enlist#1#2{\{\blist{#1}{#2}\}}

\global\long\def\tlist#1#2{\tmap{\blist{#1}{#2}}}

\global\long\def\Tlist#1#2{\Tmap{\blist{#1}{#2}}}

\global\long\def\nth#1{\mbox{#1}^{\mathrm{th}}}

\global\long\def\tunder#1#2{\tmap{\underbrace{{\scriptstyle #1}}_{#2}}}

\global\long\def\Tunder#1#2{\Tmap{\underbrace{{\scriptstyle #1}}_{#2}}}

\global\long\def\tunderi#1#2{\tunder{1,\dots,#1,\dots,1}{#2^{\mathrm{th}}\ \mathrm{position}}}

\global\long\def\Tunderi#1#2{\Tunder{1,\dots,#1,\dots,1}{#2^{\mathrm{th}}\,\mathrm{position}}}

\global\long\def\chaincat{\mathbf{Ch}}

\global\long\def\chaincatp{\chaincat_{0}}

\global\long\def\simpc{\mathbf{SC}}

\global\long\def\s{\mathfrak{S}}

\global\long\def\pco{P_{\s}}

\global\long\def\lco{L_{\s}}

\global\long\def\ns#1{\mathcal{N}^{#1}}

\global\long\def\cfn#1{N(#1)}

\global\long\def\freeop{\mathcal{F}}

\global\long\def\kerftos{\mathscr{R}}

\global\long\def\comm{\mathbf{Commute}}

\global\long\def\steen{\mathscr{S}}

\global\long\def\arity{\operatorname{arity}\,}

\global\long\def\nfc#1{\mathrm{hom}_{\steen}(\bigstar,#1)}

\global\long\def\ss{\mathbf{S}}

\global\long\def\ssz{\ss_{0}}

\global\long\def\dcat{\mathbf{D}}

\global\long\def\ords{\mathbf{\Delta}_{+}}

\global\long\def\sd{\mathfrak{f}}

\global\long\def\ds{\mathfrak{d}}

\date{\today}
\begin{abstract}
In this note, we extend earlier work by showing that if $X$ and $Y$
are delta-complexes (i.e. simplicial sets without degeneracy operators),
a\emph{ }morphism $g:\cfn X\to\cfn Y$ of Steenrod coalgebras (normalized
chain-complexes equipped with extra structure) induces one of 2-skeleta
$\hat{g}:X_{2}\to Y_{2}$, inducing a homomorphism $\pi_{1}(\hat{g}):\pi_{1}(X)\to\pi_{1}(Y)$
that is an isomorphism if $g$ is an isomorphism. This implies a corresponding
conclusion for a morphism $g:C(X)\to C(Y)$ of Steenrod coalgebras
on \emph{unnormalized} chain-complexes of \emph{simplicial sets}. 
\end{abstract}

\section{Introduction}

It is well-known that the Alexander-Whitney coproduct is functorial
with respect to simplicial maps. If $X$ is a simplicial set, $C(X)$
is the unnormalized chain-complex and $\rs 2$ is the \emph{bar-resolution}
of $\ints_{2}$ (see \cite{maclane:1975}), it is also well-known
that there is a unique homotopy class of $\ints_{2}$-equivariant
maps (where $\ints_{2}$ transposes the factors of the target) 
\[
\xi_{X}:\rs 2\otimes C(X)\to C(X)\otimes C(X)
\]
cohomology, and that this extends the Alexander-Whitney diagonal.
We will call such structures, Steenrod coalgebras and the map $\xi_{X}$
the Steenrod diagonal.

With some care (see appendix~A of \cite{smith-steenrod2}), one can
construct $\xi_{X}$ in a manner that makes it \emph{functorial} with
respect to simplicial maps although this is seldom done since the
\emph{homotopy class} of this map is what is generally studied. \foreignlanguage{english}{The
paper \cite{smith-steenrod2} showed that:}
\begin{cor*}
\ref{cor:cellular-determines-pi1}. If $X$ and $Y$ are simplicial
complexes (simplicial sets without degeneracies whose simplices are
uniquely determined by their vertices), any purely algebraic chain
map of normalized chain complexes\emph{ }
\[
f:\cfn X\to\cfn Y
\]
 that makes the diagram 
\begin{equation}
\xymatrix{{\rs 2\otimes\cfn X}\ar[r]^{1\otimes f}\ar[d]_{\xi_{X}} & {\rs 2\otimes\cfn Y}\ar[d]^{\xi_{Y}}\\
{\cfn X\otimes\cfn X}\ar[r]_{f\otimes f} & {\cfn Y\otimes\cfn Y}
}
\label{eq:coproduct-diagram-1}
\end{equation}
commute induces a map of simplicial complexes\emph{
\[
\hat{f}:X\to Y
\]
}If $f$ is an isomorphism then $\hat{f}$ is an isomorphism of simplicial
complexes --- and X and $Y$ are homeomorphic.
\end{cor*}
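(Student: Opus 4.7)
The plan is to reconstruct the combinatorial data of $X$ and $Y$ algebraically from the Steenrod coalgebra structure, and then transport this reconstruction across $f$. The first step is to recover vertices. The Steenrod diagonal $\xi_X$ evaluated on the degree-$0$ generator of $\rs{2}$ is the Alexander--Whitney coproduct $\Delta$, which sends every vertex $v$ to $v\otimes v$. A direct calculation shows that the group-like $0$-chains $\{x\in\cfn X_0:\Delta(x)=x\otimes x\}$ coincide with $V(X)\cup\{0\}$: any sum $av_1+bv_2$ of distinct vertices produces uncancellable cross-terms, while $-v$ fails since $\Delta(-v)=-(v\otimes v)\neq v\otimes v=(-v)\otimes(-v)$. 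Because the square~(\ref{eq:coproduct-diagram-1}) makes $f$ preserve group-likes, $f$ restricts to a vertex map $\hat{f}_0:V(X)\to V(Y)\cup\{0\}$.

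The second step recovers each simplex from its ordered vertex sequence. For each $k$ I would construct, by iterating $\Delta$ and projecting onto appropriate graded pieces of tensor products, a natural linear map $T_k:\cfn X_k\to\cfn X_0^{\otimes(k+1)}$ satisfying $T_k[v_0,\ldots,v_k]=[v_0]\otimes\cdots\otimes[v_k]$: the projection of $\Delta\sigma$ onto $\cfn X_0\otimes\cfn X_k$ peels off $[v_0]$, and iterating through intermediate dimensional projections peels off $v_1,\ldots,v_k$ in turn. Naturality of $T_k$ under $f$ follows from compatibility with $\Delta$ and respect for the grading. Writing $f(\sigma)=\sum_j n_j\tau_j$ as an integer combination of basis simplices of $Y$ with vertex sequences $(w_0^{(j)},\ldots,w_k^{(j)})$, the identity $(f^{\otimes(k+1)})\circ T_k=T_k\circ f$ yields
\[
\sum_j n_j\,[w_0^{(j)}]\otimes\cdots\otimes[w_k^{(j)}] \;=\; [\hat{f}_0 v_0]\otimes\cdots\otimes[\hat{f}_0 v_k].
\]
Because $Y$ is a simplicial complex, distinct $\tau_j$ have distinct vertex sequences and so the left-hand tensors are linearly independent basis elements of $\cfn Y_0^{\otimes(k+1)}$. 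Thus exactly one $\tau_j$ survives with coefficient $\pm 1$ and vertex sequence equal to $(\hat{f}_0 v_0,\ldots,\hat{f}_0 v_k)$; define $\hat{f}(\sigma)$ to be this $\tau_j$.

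Compatibility of $\hat{f}$ with face operators follows from the same naturality argument applied at dimension $k-1$: the vertex sequences of both $d_i\hat{f}(\sigma)$ and $\hat{f}(d_i\sigma)$ arise by omitting the $i$-th entry of $(\hat{f}_0 v_0,\ldots,\hat{f}_0 v_k)$, and the simplicial-complex hypothesis identifies them. When $f$ is an isomorphism of Steenrod coalgebras, running the same construction on $f^{-1}$ produces a two-sided simplicial inverse to $\hat{f}$, so $\hat{f}$ is an isomorphism of simplicial complexes and $|X|\cong|Y|$ as topological spaces.

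The main obstacle I anticipate lies in Step~2, specifically in ruling out hidden cancellations among basis simplices of $Y$ appearing in $f(\sigma)$. The simplicial-complex hypothesis on $Y$ is precisely what eliminates this issue, by ensuring that distinct $\tau_j$ produce linearly independent vertex tensors. Without this hypothesis --- as for general delta-complexes --- several simplices may share a common vertex sequence, and one can recover combinatorial information only through the $2$-skeleton, which explains the $\pi_1$-level conclusion that is the target of the body of the present paper.
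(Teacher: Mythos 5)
Your overall strategy --- recover vertices as group-like elements of the Alexander--Whitney coproduct, then pin down each simplex by its ordered vertex sequence and use the simplicial-complex hypothesis to rule out cancellation --- is sound in outline and genuinely different from the paper's route. The paper obtains $\hat f$ from the functor $\nfc{-}$ and rests on corollary~\ref{cor:n-simplices-map-to-simplices} and corollary~\ref{cor:cf-gives-simplices} of \cite{smith-steenrod2}, which use the \emph{higher} components $\xi_i$, $i\ge 1$, of the Steenrod diagonal in an essential way: for instance $\xi_1(\Delta^2)$ in equation~\eqref{eq:e1timesdelta2-1} is what detects the middle face $F_1\Delta^2$, which the Alexander--Whitney coproduct of $\Delta^2$ never sees. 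You use only $\xi_0$ and compensate by inducting over the skeleta of $X$, so that $f(F_1\sigma)$ is already known before $f(\sigma)$ is analyzed. For simplicial complexes this trade-off is legitimate, and your closing remark correctly identifies why it breaks down for delta-complexes.

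However, the map $T_k$ on which your key displayed identity rests does not exist. Every graded component of $\Delta$, and hence of any iterate of $\Delta$ followed by projections onto graded pieces, preserves total degree; a natural map $\cfn X_k\to\cfn X_0^{\otimes(k+1)}$ built this way would land in total degree $0$ and therefore vanish on $\cfn X_k$ for $k>0$. Concretely, the component of $\Delta\sigma$ in $\cfn X_0\otimes\cfn X_k$ is $[v_0]\otimes\sigma$, and there is no natural way to discard the second tensor factor (individual face operators are not natural under arbitrary chain maps). The repair is to keep the positive-dimensional factors: for $0<i<k$ the component of $\Delta$ in $\cfn X_i\otimes\cfn X_{k-i}$ sends $\sigma$ to $(\mbox{front }i\mbox{-face})\otimes(\mbox{back }(k-i)\mbox{-face})$, is natural under $f$, and --- because in a simplicial complex the pair of faces determines the simplex --- distinct $\tau_j$ still yield linearly independent basis tensors. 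With $f$ already determined on the $(k-1)$-skeleton the right-hand side is a single basis tensor with coefficient $1$, forcing a unique $\tau_j$ with $n_j=1$ (not merely $\pm 1$; for $k=1$ the coefficient must instead be read off from $\partial$). Finally, you should address the case your own first step exposes: if $\hat f_0$ sends a vertex to $0$, or identifies two vertices of a simplex, your equations force $f(\sigma)=0$ and $\hat f$ is undefined on $\sigma$, so some nondegeneracy hypothesis (e.g.\ compatibility with the augmentation) is needed for the statement as written.
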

The note extends that result, slightly, to
\begin{cor*}
\ref{cor:cellular-determines-pi1} If $X$ and $Y$ are delta-complexes,
any morphism of their canonical Steenrod coalgebras (see~proposition~\ref{prop:canonical-steenrod-coalgebra})
\[
g:\cfn X\to\cfn Y
\]
induces a map
\[
\hat{g}:X_{2}\to Y_{2}
\]
of 2-skeleta. If $g$ is an isomorphism then $X_{2}$ and $Y_{2}$
are isomorphic as delta-complexes.
\end{cor*}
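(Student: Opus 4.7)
The plan is to construct $\hat g$ by recovering the $0$-, $1$-, and $2$-simplices of $X$ and $Y$ directly from the Steenrod coalgebra structure and showing that $g$ induces set maps on each of these. I would proceed skeleton by skeleton, mimicking the strategy used for simplicial complexes in \cite{smith-steenrod2}, but with the new obstacle that in a delta-complex a simplex is no longer determined by its vertex set, so the higher components of $\xi_X$ (beyond the Alexander-Whitney diagonal) must be exploited to pin down individual basis simplices.

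In dimension zero, the Alexander-Whitney diagonal $\Delta$ underlying $\xi_X$ has the property that the only elements $x\in\cfn{X}_{0}$ satisfying $\Delta(x)=x\otimes x$ are the basis $0$-simplices of $X$; since $g$ intertwines $\xi_X$ and $\xi_Y$, it sends such grouplike elements to grouplike elements, yielding a set map $\hat g_0:X_0\to Y_0$. In dimension one, for a basis $1$-simplex $e$ with $\partial e=v_1-v_0$, the Alexander-Whitney formula gives $\Delta(e)=v_0\otimes e+e\otimes v_1$, from which one deduces that $g(e)$ is a $\ints$-linear combination of basis edges of $Y$ all sharing the endpoint pair $(\hat g_0(v_0),\hat g_0(v_1))$. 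To rule out nontrivial such sums---a phenomenon that was automatic for simplicial complexes but not for delta-complexes---I would invoke the cup-$1$ component of $\xi_X$: on a basis edge $e$, the image under $\xi_X$ of a degree-$1$ generator of $\rs 2$ has a canonical expansion whose compatibility under $g$ forces $g(e)$ to be a single basis edge up to sign, producing $\hat g_1:X_1\to Y_1$ compatible with both face operators.

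In dimension two, for a basis $2$-simplex $\sigma$, the formula $\Delta(\sigma)=v_0\otimes\sigma+d_2\sigma\otimes d_0\sigma+\sigma\otimes v_2$ recovers $d_0\sigma$ and $d_2\sigma$, and the boundary relation $\partial\sigma=d_0\sigma-d_1\sigma+d_2\sigma$ then recovers $d_1\sigma$. The same atomicity problem recurs, since two distinct basis $2$-simplices of $Y$ can share identical face data, and I expect the main technical work to be a further appeal to higher components of $\xi_X$ on $2$-chains, proving that $g(\sigma)$ cannot be a nontrivial integral combination of parallel $2$-simplices. Once atomicity is established in dimensions $1$ and $2$, face compatibility follows from $g\circ\partial=\partial\circ g$ together with $g$ intertwining the two diagonals, and the three set maps $\hat g_i$ assemble into a morphism of delta-complexes $\hat g:X_2\to Y_2$. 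If $g$ is an isomorphism of Steenrod coalgebras, applying the same construction to $g^{-1}$ produces an inverse $\widehat{g^{-1}}:Y_2\to X_2$, establishing the second assertion.

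The main obstacle is precisely the atomicity step in dimensions $1$ and $2$: proving that the image of a basis simplex under $g$ is again a basis simplex (up to sign), not a nontrivial linear combination of parallel simplices. This is also what explains why the argument stops at the $2$-skeleton: the bar-resolution $\rs 2$ does not encode enough higher homotopical information to enforce such atomicity in dimensions $\ge 3$, so one cannot push the same reconstruction further without strengthening the coalgebra to a fuller $E_\infty$ structure.
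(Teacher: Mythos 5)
Your overall strategy --- recovering the simplices of $X$ and $Y$ as distinguished elements of the chain complexes, showing that $g$ preserves them, and using the components of $\xi$ to match up faces --- is in substance the same as the paper's; the paper merely packages it functorially, defining $\nfc{C}$ to be the delta-complex of Steenrod coalgebra morphisms $\ns{n}\to C$, showing (theorem~\ref{thm:simplicial-complexes-determined}) that the natural inclusion $u_X:X\to\nfc{\cfn X}$ is the identity on $2$-skeleta, and obtaining $\hat{g}$ by composing simplices with $g$. Your recovery of $d_0\sigma$ and $d_2\sigma$ from the Alexander--Whitney term and of $d_1\sigma$ from the boundary relation is exactly the content of corollary~\ref{cor:cf-gives-simplices}.

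The gap is the step you yourself flag as the main obstacle: atomicity, i.e.\ that $g$ carries a basis $1$- or $2$-simplex to a single basis simplex rather than a $\ints$-linear combination of parallel ones. You sketch the right mechanism in dimension $1$ (the top component $\xi_1(e)=\pm e\otimes e$ forces the coefficients $n_i$ of $g(e)=\sum n_if_i$ to satisfy $n_in_j=0$ for $i\ne j$ and $n_i^2=n_i$), but in dimension $2$ you only say you \emph{expect} a further appeal to higher components to work, and in no dimension do you rule out the image being $0$, which satisfies every coproduct identity trivially and would leave $\hat{g}$ undefined on that simplex. The paper closes exactly this hole by citation: corollary~\ref{cor:n-simplices-map-to-simplices} (corollary~4.3 of \cite{smith-steenrod2}) asserts, for every $n$, that a Steenrod coalgebra morphism $\ns{n}\to\cfn{X}$ sends the generator to a generator defined by an $n$-simplex of $X$, and each simplex of $X$ composed with $g$ is such a morphism. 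This also shows that your closing diagnosis is off: the restriction to $2$-skeleta is not caused by a failure of atomicity in dimensions $\ge 3$ (atomicity holds in all dimensions), but by the face-recovery step --- corollary~\ref{cor:cf-gives-simplices}, which shows that a morphism out of $\ns{n}$ is determined by the image of the top cell, is only established for $n\le2$, because for a delta-complex the faces of a simplex are not determined by its vertices and the explicit formulas for $\xi_0$ and $\xi_1$ only isolate the individual faces of a $2$-simplex.
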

and
\begin{cor*}
\ref{cor:pi1-simplicial-set} If $X$ and $Y$ are simplicial sets
and $f:C(X)\to C(Y)$ is a morphism of their canonical Steenrod coalgebras
(see proposition~\ref{prop:canonical-steenrod-coalgebra}) over their
unnormalized chain-complexes, then $f$ induces a map 
\[
\hat{f}:X_{2}\to Y_{2}
\]
of 2-skeleta. If $f$ is an isomorphism, then $\hat{f}$ is a homotopy
equivalence.
\end{cor*}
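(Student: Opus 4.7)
The plan is to reduce this corollary to the preceding one for delta-complexes. Let $X_{\Delta}$ denote the underlying delta-complex of the simplicial set $X$, obtained by forgetting the degeneracy operators. Under this identification the normalized chain complex $N(X)=C(X)/D(X)$, where $D(X)\subseteq C(X)$ is the subcomplex generated by degenerate simplices, is precisely $\cfn{X_{\Delta}}$.

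First I would verify that the quotient $\pi_{X}\colon C(X)\to N(X)$ is a morphism of Steenrod coalgebras, so that $N(X)$ inherits the canonical Steenrod coalgebra structure of $\cfn{X_{\Delta}}$. This is standard for the Alexander--Whitney diagonal, which satisfies $\Delta(D(X))\subseteq D(X)\otimes C(X)+C(X)\otimes D(X)$, and the functorial higher Steenrod operations constructed in appendix~A of \cite{smith-steenrod2} can be arranged to respect the degenerate subcomplex in the same way.

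Second, and this I expect to be the main obstacle, one must pass from $f$ to an induced Steenrod coalgebra morphism $\bar{f}\colon N(X)\to N(Y)$. Since \emph{a priori} $f$ need not send $D(X)$ into $D(Y)$, the naive quotient need not be defined. The most concrete route is to prove that in fact $f(D(X))\subseteq D(Y)$ by induction on dimension, exploiting that degenerate chains are algebraically distinguished by their Steenrod coproducts; for example, a degenerate $1$-chain $s_{0}v$ has $\Delta(s_{0}v)=s_{0}v\otimes v+v\otimes s_{0}v$ with a $0$-simplex factor explicitly present, a property that constrains where $f$ can send it. Failing this direct approach, one can use the Eilenberg--MacLane splitting $C(X)=N(X)\oplus D(X)$ and work with the composite $\pi_{Y}\circ f\circ s_{X}$, verifying it is a Steenrod coalgebra morphism up to coherent homotopy, which should still suffice for the preceding corollary to produce the desired map on $2$-skeleta.

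Once $\bar{f}$ is constructed, the delta-complex corollary yields a map $\hat{f}\colon(X_{\Delta})_{2}\to(Y_{\Delta})_{2}$ of $2$-skeleta, and this map is an isomorphism of delta-complexes whenever $f$ is an isomorphism. Finally, the canonical comparison from the semi-simplicial realization $|(X_{\Delta})_{2}|$ to the simplicial realization $|X_{2}|$ is always a homotopy equivalence (degeneracies collapse under realization), so an isomorphism of delta-complex $2$-skeleta induces a homotopy equivalence $|X_{2}|\simeq|Y_{2}|$, completing the argument.
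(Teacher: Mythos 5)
Your reduction starts from a misidentification that derails the rest of the argument. If $X_{\Delta}$ is the delta-complex obtained from $X$ by forgetting the degeneracy operators (the functor $\sd$ of definition~\ref{def:sd-ds-functors}), then every simplex of $X$ --- including the degenerate ones --- becomes a nondegenerate simplex of $X_{\Delta}$, so $\cfn{X_{\Delta}}=N(\sd(X))$ is the \emph{unnormalized} complex $C(X)$, not $N(X)=C(X)/D(X)$; this is exactly proposition~\ref{prop:cx-is-nfx}. Consequently the given $f:C(X)\to C(Y)$ is \emph{already} a morphism of the canonical Steenrod coalgebras of the delta-complexes $\sd(X)$ and $\sd(Y)$, and corollary~\ref{cor:cellular-determines-pi1} applies to it directly. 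The step you single out as the main obstacle --- descending $f$ to $\bar{f}:N(X)\to N(Y)$ by showing $f(D(X))\subseteq D(Y)$ --- is therefore an unnecessary detour, and in any case you do not complete it: both routes you sketch are left as hopes, and the fallback of producing a morphism only ``up to coherent homotopy'' would not feed into corollary~\ref{cor:cellular-determines-pi1}, whose proof rests on corollaries~\ref{cor:n-simplices-map-to-simplices} and \ref{cor:cf-gives-simplices} and needs a strict Steenrod coalgebra morphism to force generators to go to generators.

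The paper's proof is the short version of what you were aiming at: apply corollary~\ref{cor:cellular-determines-pi1} to $\sd(X)$ and $\sd(Y)$, obtaining a map $\hat{f}:\sd(X)_{2}\to\sd(Y)_{2}$ that is an isomorphism of delta-complexes when $f$ is, and then use the natural map $g:\ds\circ\sd(X)\to X$ of equation~\ref{eq:ds-sd-unit}, which proposition~\ref{prop:homotopy-equiv-sd-ds} asserts is a homotopy equivalence, to convert this into a homotopy equivalence of $2$-skeleta. Your closing remark that ``degeneracies collapse under realization'' is the right intuition for this last step, but you apply it to the wrong object: the delta-complex whose realization must be compared with $|X_{2}|$ is $\sd(X)_{2}$, which still carries all the formerly degenerate simplices as honest cells, not a complex built only from the nondegenerate simplices of $X$ --- and the latter is not even a delta-complex in general, since faces of nondegenerate simplices may be degenerate.
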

The author conjectures that the last statement can be improved to
``if $f$ is an isomorphism, then \foreignlanguage{english}{$\hat{f}$
is a homotopy equivalence.''}

The author is indebted to Dennis Sullivan for several interesting
discussions.

\section{Definitions and assumptions\label{sec:Definitions-and-assumptions}}

Given a simplicial set, $X$, $C(X)$ will always denote its \emph{unnormalized}
chain-complex and $N(X)$ its \emph{normalized} one (with degeneracies
divided out).

We consider variations on the concept of simplicial set.
\begin{defn}
\label{def:delta-complexes}Let $\ords$ be the ordinal number category
whose morphisms are order-preserving monomorphisms between them. The
objects of $\ords$ are elements $\mathbf{n}=\{0\to1\to\cdots\to n\}$
and a morphism 
\[
\theta:\mathbf{m}\to\mathbf{n}
\]
 is a strict order-preserving map ($i<k\implies\theta(i)<\theta(j)$).
Then the category of \emph{delta-complexes,} $\dcat$, has objects
that are contravariant functors
\[
\ords\to\mathbf{Set}
\]
to the category of sets. The chain complex of a delta-complex, $X$,
will be denoted $N(X)$.\end{defn}
\begin{rem*}
In other words, delta-complexes are just simplicial sets \emph{without
degeneracies.}

A simplicial set gives rise to a delta-complex by ``forgetting''
its degeneracies --- ``promoting'' its degenerate simplices to nondegenerate
status. Conversely, a delta-complex can be converted into a simplicial
set by equipping it with degenerate simplices in a mechanical fashion.
These operations define functors:\end{rem*}
\begin{defn}
\label{def:sd-ds-functors}The functor
\[
\sd:\ss\to\dcat
\]
 is defined to simply drop degeneracy operators (degenerate simplices
become nondegenerate). The functor
\[
\ds:\dcat\to\ss
\]
equips a delta complex, $X$, with degenerate simplicies and operators
via
\begin{equation}
\ds(X)_{m}=\bigsqcup_{\mathbf{m}\twoheadrightarrow\mathbf{n}}X_{n}\label{eq:ds-functor}
\end{equation}
for all $m>n\ge0$.\end{defn}
\begin{rem*}
The functors $\sd$ and $\ds$ were denoted $F$ and $G$, respectively,
in \cite{rourke-sanderson-delta-complex}. Equation~\ref{eq:ds-functor}
simply states that we add all possible degeneracies of simplices in
$X$ subject \emph{only} to the basic identities that face- and degeneracy-operators
must satisfy. 

Although $\sd$ promotes degenerate simplicies to nondegenerate ones,
these new nondegenerate simplices can be collapsed without changing
the homotopy type of the complex: although the degeneracy operators
are no longer built in to the delta-complex, they still define contracting
homotopies.
\end{rem*}
The definition immediately implies that
\begin{prop}
\label{prop:cx-is-nfx}If $X$ is a simplicial set and $Y$ is a delta-complex,
$C(X)=N(\sd(X))$, $N(\ds(Y))=N(Y)$, and $C(X)=N(\ds\circ\sd(X))$.
\end{prop}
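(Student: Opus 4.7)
The plan is to verify each of the three identities directly from the definitions, since all of the chain complexes in play are, in each degree, free abelian groups on a readily identifiable set of simplices. The key facts to keep in mind are: (i) for a simplicial set $X$, $C(X)_n$ is the free $\ring$-module on the entire set $X_n$ (no quotienting); (ii) for a delta-complex $Y$, the chain complex $N(Y)_n$ is just the free $\ring$-module on $Y_n$ with boundary $\sum(-1)^i\partial_i$, since a delta-complex has no degeneracy operators to quotient out; and (iii) for a simplicial set $Z$, $N(Z)_n$ is $C(Z)_n$ modulo the submodule generated by degenerate simplices.

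For the first identity $C(X)=N(\sd(X))$: the functor $\sd$ preserves the underlying sets $X_n$ and the face operators by construction, only forgetting degeneracies. Hence $N(\sd(X))_n$ and $C(X)_n$ are both the free $\ring$-module on $X_n$, and the boundary operators agree term-by-term because the face operators do.

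For the second identity $N(\ds(Y))=N(Y)$: by equation~(\ref{eq:ds-functor}), the $m$-simplices of $\ds(Y)$ are pairs $(\theta,y)$ with $\theta:\mathbf{m}\twoheadrightarrow\mathbf{n}$ an epimorphism in the simplicial category and $y\in Y_n$. Such a pair is nondegenerate exactly when $\theta$ is the identity map $\mathbf{m}\to\mathbf{m}$, so the set of nondegenerate $m$-simplices of $\ds(Y)$ is canonically $Y_m$. Thus $N(\ds(Y))_m$, being the quotient of $C(\ds(Y))_m$ by the degenerate subcomplex, is the free $\ring$-module on $Y_m$, matching $N(Y)_m$ in each degree and with the same face-induced boundary. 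The third identity is then just the composite $C(X)=N(\sd(X))=N(\ds(\sd(X)))$, obtained by applying the first identity and then the second to the delta-complex $\sd(X)$.

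The only substantive thing to verify is compatibility of boundary operators with the above identifications, and this is essentially built in: $\sd$ and $\ds$ are set up so that face operators are preserved or inherited in the obvious way. Consequently there is no real obstacle to the proof beyond this bookkeeping, and the proposition genuinely is an \emph{immediate} consequence of Definitions~\ref{def:delta-complexes} and~\ref{def:sd-ds-functors} together with the standard definitions of $C(-)$ and $N(-)$.
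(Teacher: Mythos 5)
Your verification is correct and follows exactly the route the paper intends: the paper offers no written proof beyond the remark that "the definition immediately implies" the proposition, and your degree-by-degree identification of generators (all of $X_n$ for $C(X)$ and $N(\sd(X))$; the nondegenerate simplices $(\mathrm{id},y)$ of $\ds(Y)$ for $N(\ds(Y))$) together with the compatibility of face operators is precisely the bookkeeping being elided. Nothing is missing.
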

Theorem~1.7 of \cite{rourke-sanderson-delta-complex} shows that
there exists an adjunction:

\begin{equation}
\ds:\dcat\leftrightarrow\ss:\sd\label{eq:ds-sd-adjunction}
\end{equation}
The composite (the \emph{counit} of the adjunction)
\[
\sd\circ\ds:\dcat\to\dcat
\]
maps a delta complex into a much larger one --- that has an infinite
number of (degenerate) simplices added to it. There is a natural inclusion
\[
\iota:X\to\sd\circ\ds(X)
\]
 and a natural map (the \emph{unit} of the adjunction)
\begin{equation}
g:\ds\circ\sd(X)\to X\label{eq:ds-sd-unit}
\end{equation}
The functor $g$ sends degenerate simplices of $X$ that had been
``promoted to nondegenerate status'' by $\sd$ to their degenerate
originals --- and the extra degenerates added by $\ds$ to suitable
degeneracies of the simplices of $X$. 

In \cite{rourke-sanderson-delta-complex}, Rourke and Sanderson also
prove: 
\begin{prop}
\label{prop:homotopy-equiv-sd-ds}If $X$ is a simplicial set and
$Y$ is a delta-complex then
\begin{enumerate}
\item $|Y|$ and $|\ds Y|$ are homeomorphic
\item the map $|g|:|\ds\circ\sd(X)|\to|X|$ is a homotopy equivalence.
\item $\sd:H\ss\to H\dcat$ defines an equivalence of categories, where
$H\ss$ and $H\dcat$ are the homotopy categories, respectively, of
$\ss$ and $\dcat$. The inverse is $\ds:H\dcat\to H\ss$. In particular,
if $X$ is a simplicial set, the natural map
\[
g:\ds\circ\sd(X)\to X
\]
is a homotopy equivalence.
\end{enumerate}
\end{prop}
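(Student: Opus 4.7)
The plan is to prove the three parts in order, exploiting the explicit cellular structure of geometric realizations together with a single homotopical analysis of the ``extra degenerate'' cells.

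For part~(1) I would write down an explicit natural homeomorphism. By definition~\ref{def:sd-ds-functors}, an $n$-simplex of $\ds Y$ is represented by a surjection $\theta:\mathbf{n}\twoheadrightarrow\mathbf{k}$ together with $y\in Y_{k}$, and the simplicial realization identifies every formally-degenerate simplex with a non-degenerate one via the affine map $\theta_{*}:\Delta^{n}\to\Delta^{k}$ induced by $\theta$. Accordingly, the assignment $(y,\theta,x)\mapsto(y,\theta_{*}(x))$ descends to a continuous bijection $|\ds Y|\to|Y|$, well-definedness on the face identifications of $\ds Y$ reducing to the equation $\theta_{*}\circ\delta_{i}=(\theta\delta_{i})_{*}$. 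Both sides are CW complexes whose cells are in canonical bijection (one per simplex of $Y$), so this cellular bijection is a homeomorphism.

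For part~(2) I would reduce to showing that the forgetful map $\pi:|\sd(X)|\to|X|$, where $\sd(X)$ is regarded as a delta-complex with the same underlying simplices as $X$ but no degeneracy relations imposed on the realization, is a homotopy equivalence. Indeed, applying~(1) with $Y=\sd(X)$ produces a natural homeomorphism $|\ds\circ\sd(X)|\cong|\sd(X)|$ over $|X|$, which identifies $|g|$ with $\pi$. To analyse $\pi$ I would invoke the Eilenberg--Zilber lemma to decompose each simplex of $X$ uniquely as $s^{*}\sigma$ for a non-degenerate $\sigma$ and a degeneracy operator $s$; the cells of $|\sd(X)|$ not already present in $|X|$ then correspond to such pairs $(\sigma,s)$ with $s$ non-trivial, and $\pi$ collapses each via the linear projection induced by $s$. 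I would then build a deformation retraction of $|\sd(X)|$ onto $|X|$ by induction on skeletal dimension, pairing each ``extra'' cell with one of its faces to produce an elementary CW collapse and assembling these into a continuous homotopy.

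For part~(3) I would deduce the equivalence of homotopy categories from~(1), (2), and a triangle identity for the adjunction~(\ref{eq:ds-sd-adjunction}). Part~(2) supplies a natural homotopy equivalence $g:\ds\circ\sd(X)\to X$ for every simplicial set $X$. For the unit $\iota:Y\to\sd\circ\ds(Y)$ on a delta-complex $Y$, the triangle identity gives $g_{\ds Y}\circ\ds(\iota)=\mathrm{id}_{\ds Y}$; since $|g_{\ds Y}|$ is a homotopy equivalence by~(2), so is $|\ds(\iota)|$, and naturality of~(1) identifies this with $|\iota|$. Standard formal arguments then promote $\sd$ and $\ds$ to mutually inverse equivalences of homotopy categories. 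The main obstacle is the deformation retraction in part~(2): setting up the skeletal induction demands a pairing of the ``extra'' cells $(\sigma,s)$ with faces so that the elementary collapses stay consistent with the ambient simplicial identities, and either a careful combinatorial matching or a chain-level contraction combined with Whitehead's theorem will be needed to carry it out cleanly.
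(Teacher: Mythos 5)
The paper does not actually prove this proposition: parts (1) and (2) are quoted from Rourke--Sanderson (their Proposition~2.1), part (3) from their Theorem~6.9, and the last sentence from Whitehead's theorem. You are therefore attempting something strictly harder --- a from-scratch proof. Your part (1) is correct, and the formal skeleton of part (3) (the triangle identity $g_{\ds Y}\circ\ds(\iota)=\mathrm{id}_{\ds Y}$ plus two-out-of-three and the identifications from part (1)) is sound, modulo the fact that ``homotopy'' in $\dcat$ needs to be defined with some care, which is a nontrivial part of what Rourke--Sanderson actually do.

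The genuine gap is in part (2). The reduction of $|g|$ to the quotient map from the ``fat'' realization $|\sd(X)|$ (one geometric simplex per simplex of $X$, degenerate or not) onto $|X|$ is right, but the plan to ``build a deformation retraction of $|\sd(X)|$ onto $|X|$'' is not well-posed, because $|X|$ is not a subcomplex of $|\sd(X)|$. The cells of $|\sd(X)|$ indexed by the non-degenerate simplices of $X$ do not form a subcomplex: a face of a non-degenerate simplex can be degenerate. Already for $X=\Delta^{2}/\partial\Delta^{2}$ the unique non-degenerate $2$-simplex has all three faces equal to $s_{0}v$, so its boundary in $|\sd(X)|$ runs over a $1$-cell that is collapsed in $|X|$; there is no embedded copy of $|X|$ to retract onto, and the elementary-collapse matching cannot be set up as stated. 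What one actually proves is that the quotient map itself is a homotopy equivalence --- for instance by induction over skeleta using the gluing lemma (showing each stage is a homotopy pushout and that the attaching maps of the cells indexed by degenerate simplices become compatibly null after the identifications), or by Segal's ``good simplicial space'' argument, every simplicial set being good. Your fallback of ``chain-level contraction plus Whitehead'' also needs repair for non-simply-connected $X$: Whitehead's theorem requires an isomorphism on $\pi_{1}$ and on homology with local coefficients (equivalently, of universal covers), not merely on integral homology. The cleanest fix is the one the paper takes: cite Rourke--Sanderson.
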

\begin{rem*}
Here, $|*|$ denotes the topological realization functors for $\ss$
and $\dcat$.\end{rem*}
\begin{proof}
The first two statements are proposition~2.1 of \cite{rourke-sanderson-delta-complex}
and statement~3 is theorem~6.9 of the same paper. The final statement
follows from Whitehead's theorem.
\end{proof}

\section{Steenrod coalgebras\label{sec:Steenrod-coalgebras}}

We begin with:
\begin{defn}
\label{def:Steenrod-coalgebra}A \emph{Steenrod coalgebra,} $(C,\delta)$
is a chain-complex $C\in\chaincat$ equipped with a $\ints_{2}$-equivariant
chain-map
\[
\delta:\rs 2\otimes C\to C\otimes C
\]
 where $\ints_{2}$ acts on $C\otimes C$ by swapping factors and
$\rs 2$ is the bar-resolution of $\ints$ over $\zs 2$. A morphism
$f:(C,\delta_{C})\to(D,\delta_{D})$ is a chain-map $f:C\to D$ that
makes the diagram
\[
\xyR{30pt}\xymatrix{{C}\ar[r]^{f}\ar[d]_{\delta_{C}} & {D}\ar[d]^{\delta_{D}}\\
{C\otimes C}\ar[r]_{f\otimes f} & {D\otimes D}
}
\]
commute.
\end{defn}
Steenrod coalgebras are very general --- the underlying coalgebra
need not even be coassociative. The category of Steenrod coalgebras
is denoted $\steen$.

Appendix~A of \cite{smith-steenrod2} shows that:
\begin{prop}
\label{prop:canonical-steenrod-coalgebra}If $X$ is a simplicial
set or delta-complex, then the unnormalized and normalized chain-complexes
of $X$ have a natural Steenrod coalgebra structure, i.e. natural
maps
\begin{align*}
\xi:\rs 2\otimes N(X) & \to N(X)\otimes N(X)\\
\xi:\rs 2\otimes C(X) & \to C(X)\otimes C(X)
\end{align*}
\end{prop}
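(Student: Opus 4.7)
The plan is to construct $\xi$ by the classical method of acyclic models, but carried out with enough care to produce a strictly natural transformation rather than one defined merely up to chain homotopy. I would first treat the unnormalized case, $\xi\colon \rs 2 \otimes C(X) \to C(X)\otimes C(X)$; the normalized version then follows by restriction to $\cfn{X} \subset C(X)$.

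The key structural observation is that $C(-)$ (and likewise $\cfn{-}$) is a left Kan extension of its restriction to the subcategory of standard models --- the ordered simplices of $\ords$ in the delta-complex case, or the representable simplicial sets $\Delta^n$ in the simplicial set case. Consequently, a natural transformation whose source involves $C(-)$ is uniquely determined by specifying its value on the tautological $n$-simplex $\iota_n$ of each model $\Delta^n$, and any such values extend functorially to all of $X$.

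Fix the standard $\ints S_2$-free basis $\{e_i\}_{i \ge 0}$ of $\rs 2$ with $\deg e_i = i$ and $\partial e_i = (1 + (-1)^i T)e_{i-1}$. I would construct the elements $\xi(e_i \otimes \iota_n) \in (C(\Delta^n)\otimes C(\Delta^n))_{n+i}$ by double induction on $i$ and $n$: set $\xi(e_0 \otimes \iota_n) := \Delta_{AW}(\iota_n)$, the Alexander--Whitney diagonal; at each inductive stage the chain-map identity together with earlier choices forces $\partial\xi(e_i\otimes\iota_n)$ to equal a specific chain which, by the inductive hypothesis, is a cycle, and since $|\Delta^n|$ is contractible the complex $C(\Delta^n)\otimes C(\Delta^n)$ is acyclic in positive degrees, so a preimage exists. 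Having made these choices on models, extend to an arbitrary $X$ by
\[
\xi_X(e_i \otimes C(\sigma)(\iota_n)) := (C(\sigma)\otimes C(\sigma))\bigl(\xi(e_i \otimes \iota_n)\bigr)
\]
for every structure map $\sigma\colon\Delta^n \to X$, together with $\ints S_2$-equivariance in the $\rs 2$-factor. Naturality in $X$ is then immediate from the universal property of the Kan extension, and the chain-map property of $\xi_X$ is inherited from that of $\xi$ on models.

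The main obstacle is arranging the inductive choices of $\xi(e_i\otimes\iota_n)$ so that they are simultaneously compatible across all of the models --- this is precisely what the equivariant acyclic-models theorem guarantees, provided one sets up the induction correctly in the two indices $(i,n)$ and uses that the positive-degree acyclicity of $C(\Delta^n)\otimes C(\Delta^n)$ is uniform in $n$. A secondary but nontrivial point is the descent to $\cfn{X}$: one must verify that the preimages in the induction can be chosen inside the normalized subcomplex of $C(\Delta^n)\otimes C(\Delta^n)$, which is possible because normalization is a chain homotopy equivalence and therefore that subcomplex is itself acyclic in positive degrees.
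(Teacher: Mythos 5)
The paper itself gives no proof of this proposition---it defers entirely to Appendix~A of \cite{smith-steenrod2}---and your equivariant acyclic-models construction (choices of $\xi(e_i\otimes\iota_n)$ on the standard models by double induction, using acyclicity of $C(\Delta^n)\otimes C(\Delta^n)$ in positive degrees, then extension to general $X$ via the freeness of $C(-)$ on those models, which is what makes the result strictly natural rather than natural up to homotopy) is exactly the kind of argument that appendix carries out, so your approach matches the intended one. One small correction: in this paper's conventions $N(X)$ is the \emph{quotient} of $C(X)$ by the degenerate subcomplex, not a subcomplex, so the normalized case is obtained by arranging that $\xi$ carries $\rs 2\otimes D(X)$ into $D(X)\otimes C(X)+C(X)\otimes D(X)$ and descending (consistent with the paper's remark that the structure on $N(X)$ is a natural quotient of that on $C(X)$), rather than by restriction to a subcomplex.
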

\begin{rem*}
If $[\,]$ is the 0-dimensional generator of $\rs 2$, the map $\xi([\,]\otimes*):N(X)\to N(X)\otimes N(X)$
is nothing but the Alexander-Whitney coproduct.

The Steenrod coalgebra structure for $N(X)$ is a \emph{natural quotient}
of that for $C(X)$.
\end{rem*}
Here are some computations of this Steenrod coalgebra structure from
\foreignlanguage{american}{appendix~A of \cite{smith-steenrod2}:}
\begin{fact*}
\label{example:e1timesdelta2}If $\Delta^{2}$ is a $2$-simplex,
then

\begin{equation}
\xi([\,]\otimes\Delta^{2})=\Delta^{2}\otimes F_{0}F_{1}\Delta^{2}+F_{2}\Delta^{2}\otimes F_{0}\Delta^{2}+F_{1}F_{2}\Delta^{2}\otimes\Delta^{2}\label{eq:delta-2-coproduct-1}
\end{equation}
--- the standard (Alexander-Whitney) coproduct --- and

\begin{align}
\xi([(1,2)]\otimes\Delta^{2})= & \Delta^{2}\otimes F_{0}\Delta^{2}-F_{1}\Delta^{2}\otimes\Delta^{2}\label{eq:e1timesdelta2-1}\\
 & -\Delta^{2}\otimes F_{2}\Delta^{2}\nonumber 
\end{align}

\end{fact*}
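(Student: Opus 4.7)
The plan is to verify both formulas by combining two ingredients: the identification of $\xi([\,]\otimes -)$ with the Alexander-Whitney coproduct, and the chain-map condition $\partial\xi = \xi\partial$ with respect to the bar differential on $\rs 2$.

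For (\ref{eq:delta-2-coproduct-1}), the remark following Proposition~\ref{prop:canonical-steenrod-coalgebra} identifies $\xi([\,]\otimes -)$ with the Alexander-Whitney diagonal $\Delta_{AW}(\sigma) = \sum_{i=0}^{n}[v_0,\dots,v_i]\otimes[v_i,\dots,v_n]$. I would expand this on $\Delta^2 = [v_0,v_1,v_2]$ and rewrite the resulting front/back faces using iterated face operators: $[v_0] = F_1F_2\Delta^2$, $[v_0,v_1] = F_2\Delta^2$, $[v_1,v_2] = F_0\Delta^2$, $[v_2] = F_0F_1\Delta^2$. The three summands of $\Delta_{AW}(\Delta^2)$ are then exactly the three terms of (\ref{eq:delta-2-coproduct-1}).

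For (\ref{eq:e1timesdelta2-1}), I would proceed by dimension, solving the chain-map equation for $\xi([(1,2)]\otimes -)$ inductively. The bar differential gives $\partial[(1,2)] = T[\,]-[\,]$, where $T$ is the nontrivial element of $\mathbb{Z}_2$. On $0$-simplices $\xi([(1,2)]\otimes v)=0$ for degree reasons. On a $1$-simplex $\Delta^1$, the equation $\partial\xi([(1,2)]\otimes\Delta^1) = (T-1)\xi([\,]\otimes\Delta^1)$ has the unique solution $\Delta^1\otimes\Delta^1$, because there are no cycles of degree $2$ in $C(\Delta^1)\otimes C(\Delta^1)$. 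By naturality applied to the face inclusions $F_i\colon\Delta^1\hookrightarrow\Delta^2$, this gives $\xi([(1,2)]\otimes F_i\Delta^2) = F_i\Delta^2\otimes F_i\Delta^2$. Then I would expand
\begin{align*}
\partial\xi([(1,2)]\otimes\Delta^2) &= (T-1)\,\xi([\,]\otimes\Delta^2) - \sum_{i=0}^{2}(-1)^i F_i\Delta^2\otimes F_i\Delta^2,
\end{align*}
and verify by a direct Leibniz-rule computation in $C(\Delta^2)\otimes C(\Delta^2)$ that the differential of the stated right-hand side of (\ref{eq:e1timesdelta2-1}) matches this expression term-for-term.

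The main obstacle is uniqueness. On $\Delta^2$ the chain-map equation pins $\xi([(1,2)]\otimes\Delta^2)$ down only modulo the boundary of the top cell $\Delta^2\otimes\Delta^2$, so merely solving the equation is not enough to produce the precise formula in (\ref{eq:e1timesdelta2-1}); different natural constructions of $\xi$ could in principle give representatives differing by a multiple of $\partial(\Delta^2\otimes\Delta^2)$. To single out the displayed representative one must appeal to the specific naturality / acyclic-carrier construction of $\xi$ from appendix~A of \cite{smith-steenrod2}, which forces a canonical choice on the standard simplex. Once that construction is invoked, the remainder of the argument is sign bookkeeping.
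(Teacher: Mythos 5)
The paper itself offers no proof of this Fact: it is imported verbatim from appendix~A of \cite{smith-steenrod2}, so there is no internal argument to compare yours against. Your proposal is a legitimate independent verification and is essentially sound. The first formula is immediate once you translate the front/back faces of the Alexander--Whitney diagonal into iterated face operators, and your dictionary ($[v_2]=F_0F_1\Delta^2$, $[v_0,v_1]=F_2\Delta^2$, $[v_1,v_2]=F_0\Delta^2$, $[v_0]=F_1F_2\Delta^2$) is correct. For the second formula, your inductive acyclic-models argument is the right mechanism: the degree-$2$ part of $N(\Delta^1)\otimes N(\Delta^1)$ is spanned by $\Delta^1\otimes\Delta^1$, whose boundary is nonzero, so $\xi([(1,2)]\otimes\Delta^1)$ is indeed uniquely determined, and naturality then fixes the contribution of the faces of $\Delta^2$. (Note that the relevant complex here is the normalized one, $N(\Delta^1)=\cfn{\Delta^1}$, not the unnormalized $C(\Delta^1)$ as you wrote --- in the unnormalized complex there are degenerate chains in every degree and the uniqueness claim would need more care.) Most importantly, you correctly identify the one genuine limitation of this route: since $H_3\bigl(N(\Delta^2)\otimes N(\Delta^2)\bigr)=0$ but the boundaries in degree $3$ form a rank-one subgroup generated by $\partial(\Delta^2\otimes\Delta^2)$, the chain-map equation pins down $\xi([(1,2)]\otimes\Delta^2)$ only modulo integer multiples of that boundary, and the specific representative displayed in \eqref{eq:e1timesdelta2-1} can only be certified by running the explicit construction of appendix~A of \cite{smith-steenrod2} --- which is in effect exactly what the paper does by citing that appendix. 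The one place you should be careful before declaring victory is the deferred ``sign bookkeeping'': the target expression $(T-1)\xi([\,]\otimes\Delta^2)-\sum(-1)^iF_i\Delta^2\otimes F_i\Delta^2$ depends on the Koszul sign in the $\ints_2$-action on $C\otimes C$ and on the orientation of $\partial[(1,2)]$, and with the wrong convention the displayed right-hand side will not satisfy the equation; this check is mechanical but not optional.
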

Corollary~4.3 of \cite{smith-steenrod2} proves that:
\begin{cor}
\label{cor:n-simplices-map-to-simplices}Let $X$ be a simplicial
set and suppose 
\[
f:\ns n=\cfn{\Delta^{n}}\to\cfn X
\]
 is a Steenrod coalgebra morphism. Then the image of the generator
$\Delta^{n}\in\cfn{\Delta^{n}}_{n}$ is a generator of $\cfn X_{n}$
defined by an $n$-simplex of $X$.
\end{cor}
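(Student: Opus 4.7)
The plan is to argue by induction on $n$, exploiting the functoriality of the Steenrod coalgebra structure under inclusion of subcomplexes. For any vertex-subset $S\subseteq\{0,1,\ldots,n\}$ the inclusion $\Delta^{S}\hookrightarrow\Delta^{n}$ is a map of simplicial sets and therefore induces an inclusion of sub-Steenrod-coalgebras $\cfn{\Delta^{S}}\hookrightarrow\cfn{\Delta^{n}}$; the restriction of $f$ to any such subcomplex is again a Steenrod coalgebra morphism, so the inductive hypothesis can be applied to every face of $\Delta^{n}$.

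For the base case $n=0$, write $f([0])=\sum_{i}a_{i}v_{i}$ as a $\ints$-linear combination of vertices of $X$. Every vertex $v$ satisfies $\xi([\,]\otimes v)=v\otimes v$, so compatibility with $f$ forces
\[
\sum_{i}a_{i}\,(v_{i}\otimes v_{i}) \;=\; \Bigl(\sum_{i}a_{i}v_{i}\Bigr)\otimes\Bigl(\sum_{j}a_{j}v_{j}\Bigr).
\]
Equating coefficients yields $a_{i}^{2}=a_{i}$ and $a_{i}a_{j}=0$ for $i\ne j$, forcing $f([0])$ to be either zero or a single vertex of $X$.

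For the inductive step, write $f(\Delta^{n})=\sum_{\sigma}a_{\sigma}\sigma$ with $\sigma$ ranging over nondegenerate $n$-simplices of $X$. Applying $\xi([\,]\otimes-)$ to both sides and comparing in each bidegree $(k,n-k)$ produces the relation
\[
\sum_{\sigma}a_{\sigma}\,\sigma|_{[0,\ldots,k]}\otimes\sigma|_{[k,\ldots,n]} \;=\; f([0,\ldots,k])\otimes f([k,\ldots,n]).
\]
By the inductive hypothesis, each factor on the right is a single signed simplex of $X$, and the $\ints$-linear independence of distinct simplex tensor pairs forces every $\sigma$ in the support of $f(\Delta^{n})$ to share a common front face $[0,\ldots,k]$ and back face $[k,\ldots,n]$ for every $k$; compatibility with $\partial$ then pins down the total coefficient $\sum_{\sigma}a_{\sigma}$.

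The principal obstacle is that, in a general simplicial set (unlike a simplicial complex), distinct nondegenerate $n$-simplices may share identical front and back faces, so the Alexander--Whitney piece alone cannot collapse the support to a single simplex. Eliminating this is where compatibility with the higher pieces $\xi([\theta]\otimes-)$ for positive-degree $\theta\in\rs 2$ must enter: the explicit calculation above already shows that $\xi([(1,2)]\otimes-)$ detects the ``middle'' face $F_{1}\Delta^{2}$, which is invisible to Alexander--Whitney, and iterating through the higher generators of $\rs 2$ should recover every face of $\Delta^{n}$. The heart of the argument is therefore a combinatorial computation showing that the full family of constraints coming from every $\xi([\theta]\otimes\Delta^{n})$ uniquely determines $f(\Delta^{n})$ up to sign as a single nondegenerate $n$-simplex of $X$.
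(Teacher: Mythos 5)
There is a genuine gap, and it sits exactly where you place "the heart of the argument": the final step is not carried out but only asserted to exist ("iterating through the higher generators of $\rs 2$ \emph{should} recover every face"). Note first that the paper does not prove this corollary here at all --- it is quoted from Corollary~4.3 of \cite{smith-steenrod2} --- so your reconstruction must stand on its own, and as written it does not close. The Alexander--Whitney components and $\partial$-compatibility genuinely cannot finish the job (as you observe, two distinct nondegenerate $n$-simplices of a simplicial set may have identical faces, and worse, the coefficient sum is not pinned down by $\partial$ when $\partial\sigma=0$ in $\cfn X$, e.g.\ for a $1$-simplex forming a loop). Moreover the higher operation you single out, $\xi([(1,2)]\otimes\Delta^{2})$, does not resolve the ambiguity either: if $\sigma_{1},\sigma_{2}$ share all faces, then $f(\Delta^{2})=2\sigma_{1}-\sigma_{2}$ is compatible with equation~\ref{eq:e1timesdelta2-1}, since every term there has a proper face in one tensor factor and the faces agree.

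The missing idea is the \emph{top} operation. For degree reasons $\xi(e_{n}\otimes\Delta^{n})$ must lie in bidegree $(n,n)$, and one computes $\xi(e_{n}\otimes\Delta^{n})=\pm\,\Delta^{n}\otimes\Delta^{n}$; that is, the top generator is "grouplike" for $\xi_{n}$ exactly as a vertex is for $\xi_{0}$. Writing $f(\Delta^{n})=\sum_{\sigma}a_{\sigma}\sigma$ and comparing the $(n,n)$-components of $\xi_{n}(f(\Delta^{n}))$ and $(f\otimes f)\xi_{n}(\Delta^{n})$ gives $a_{\sigma}^{2}=a_{\sigma}$ and $a_{\sigma}a_{\tau}=0$ for $\sigma\ne\tau$, i.e.\ your base-case argument runs verbatim in every dimension at once --- no induction over faces and no iteration through all of $\rs 2$ is needed. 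Two loose ends would remain even then: (i) this only yields that $f(\Delta^{n})$ is a single simplex \emph{or zero}, and your base case likewise never excludes $f([0])=0$ (the definition of Steenrod coalgebra used here has no counit, so the zero option must be ruled out by some additional argument or hypothesis if the stated conclusion is to hold); and (ii) you should check that the relevant simplex is nondegenerate, i.e.\ a genuine generator of $\cfn X_{n}$ rather than a degenerate class, which is where working in the normalized complex matters.
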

We can prove a delta-complex (partial) analogue of corollary~4.5
in \foreignlanguage{american}{\cite{smith-steenrod2}:}
\begin{cor}
\label{cor:cf-gives-simplices}Let $X$ be a delta-complex, let $n\le2$,
and let 
\[
f:\cfn{\Delta^{n}}\to\cfn X
\]
map $\Delta^{n}$ to a simplex $\sigma\in N(X)$ defined by the simplicial-map
$\iota:\Delta^{n}\to X$. Then $f=\cfn{\iota}$.\end{cor}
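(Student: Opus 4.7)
My plan is to verify that $f$ agrees with $\cfn{\iota}$ on every face of $\Delta^{n}$ by using the Steenrod coalgebra axiom to extract the value of $f$ on each face from a tensor identity in $N(X)\otimes N(X)$. The key technical point is that $\sigma$ is a basis element of the free $\ints$-module $N(X)_{n}$, so the bigraded decomposition $N(X)\otimes N(X)=\bigoplus_{p,q}N(X)_{p}\otimes N(X)_{q}$ lets me project any tensor identity onto a chosen bidegree, and the maps $\sigma\otimes(-)$ and $(-)\otimes\sigma$ are injective. The cases $n=0$ and $n=1$ are immediate: nothing to prove for $n=0$, and for $n=1$ the Alexander-Whitney formula
\[
\xi([\,]\otimes\Delta^{1})=\Delta^{1}\otimes F_{0}\Delta^{1}+F_{1}\Delta^{1}\otimes\Delta^{1}
\]
equated under $(f\otimes f)$ with $\xi([\,]\otimes\sigma)$ forces, after projection onto bidegrees $(1,0)$ and $(0,1)$, the identities $f(F_{i}\Delta^{1})=F_{i}\sigma$.

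For $n=2$, the same method applied to $[\,]\otimes\Delta^{2}$ via equation~(\ref{eq:delta-2-coproduct-1}) yields $f(F_{0}F_{1}\Delta^{2})=F_{0}F_{1}\sigma$ and $f(F_{1}F_{2}\Delta^{2})=F_{1}F_{2}\sigma$ from the outer bidegrees, but the middle bidegree $(1,1)$ produces only the tensor relation
\[
f(F_{2}\Delta^{2})\otimes f(F_{0}\Delta^{2})=F_{2}\sigma\otimes F_{0}\sigma,
\]
which does not by itself separate the two factors. This is the main obstacle: in a delta-complex two 1-simplices can share both of their endpoints, so the Alexander-Whitney diagonal alone cannot distinguish such faces of $\sigma$.

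To resolve the ambiguity I would repeat the computation with $[(1,2)]$ in place of $[\,]$, using equation~(\ref{eq:e1timesdelta2-1}). Projection onto bidegree $(1,2)$ gives $f(F_{1}\Delta^{2})=F_{1}\sigma$, and projection onto bidegree $(2,1)$ gives the antisymmetric identity $f(F_{0}\Delta^{2})-f(F_{2}\Delta^{2})=F_{0}\sigma-F_{2}\sigma$. Combining this with the symmetric identity $f(F_{0}\Delta^{2})+f(F_{2}\Delta^{2})=F_{0}\sigma+F_{2}\sigma$ forced by the chain-map condition $\partial\sigma=f(\partial\Delta^{2})$, and using that $N(X)$ is torsion-free, yields $f(F_{i}\Delta^{2})=F_{i}\sigma$ for $i=0,2$. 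The last vertex $f(F_{0}F_{2}\Delta^{2})$ is then forced by applying the chain-map condition to $F_{2}\Delta^{2}$, completing the identification of $f$ with $\cfn{\iota}$ and making transparent why the restriction $n\le 2$ arises precisely from the structure available through the $[\,]$- and $[(1,2)]$-components of $\xi$.
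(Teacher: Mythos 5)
Your argument is correct, and its overall strategy --- reading off the images of the faces of $\Delta^{n}$ from the bigraded components of the commutativity constraints imposed by $\xi_{0}$ and $\xi_{1}$, using that $\sigma=f(\Delta^{n})$ is a basis element so that $\sigma\otimes(-)$ and $(-)\otimes\sigma$ are injective --- is the same as the paper's. Where you genuinely diverge is the treatment of $F_{0}\Delta^{2}$ and $F_{2}\Delta^{2}$: the paper extracts them from the middle, $(1,1)$-bidegree term $F_{2}\Delta^{2}\otimes F_{0}\Delta^{2}$ of the Alexander--Whitney coproduct (which does split, but only because corollary~\ref{cor:n-simplices-map-to-simplices}, applied to the restrictions of $f$ to the edges, guarantees that each $f(F_{i}\Delta^{2})$ is a single generator; over $\ints$ alone a pure-tensor identity $a\otimes b=c\otimes d$ leaves a residual common sign ambiguity), and it uses equation~(\ref{eq:e1timesdelta2-1}) only to capture $F_{1}\Delta^{2}$. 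You avoid splitting the $(1,1)$ tensor altogether, obtaining the difference $f(F_{0}\Delta^{2})-f(F_{2}\Delta^{2})$ from the $(2,1)$-component of $\xi_{1}$ and the sum from $\partial f(\Delta^{2})=f(\partial\Delta^{2})$. This makes the argument self-contained (it never needs to know that faces go to single simplices, nor the citation of Corollary~4.5 of \cite{smith-steenrod2} that the paper uses for the case where $\iota$ is an inclusion), at the harmless cost of a division by $2$ in the torsion-free module $N(X)$. Your parenthetical worry that two edges of a delta-complex sharing both endpoints would defeat the Alexander--Whitney diagonal is not quite the right diagnosis of the $(1,1)$ obstacle --- the tensor identity is insensitive to vertex coincidences --- but it does not affect the validity of your proof.
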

\begin{proof}
Let
\[
\xi_{i}=\xi(e_{i}\otimes*):N(\Delta^{n})\to N(\Delta^{n})\otimes N(\Delta^{n})
\]
 denote the Steenrod coalgebra structure, where $e_{i}$ is the generator
of $(\rs 2)_{i}$. By hypothesis, the diagram\foreignlanguage{american}{
\[
\xymatrix{{\cfn{\Delta^{n}}}\ar[r]^{1\otimes f}\ar[d]_{\xi_{i}} & {\cfn X}\ar[d]^{\xi_{i}}\\
{\cfn{\Delta^{n}}\otimes\cfn{\Delta^{n}}}\ar[r]_{f\otimes f} & {\cfn X\otimes\cfn X}
}
\]
commutes for all $i\ge0$.}

If $\iota$ is an inclusion (and $n$ is arbitrary), the conclusion
follows from corollary~4.5 in \cite{smith-steenrod2}. If $n=1$,
and $\iota$ identifies the endpoints of $\Delta^{1}$, there is a
\emph{unique} morphism from $N(\Delta^{1})$ to $\im N(\iota)$ that
sends $N(\Delta^{1})_{1}$ to $\im N(\iota)_{1}$.

If $n=2$, equation~\ref{eq:delta-2-coproduct-1} implies that 
\[
\im(\xi_{0}(\Delta^{2}))=F_{2}\Delta^{2}\otimes F_{0}\Delta^{2}\in\left(N(X)/N(X)_{0}\right)\otimes\left(N(X)/N(X)_{0}\right)
\]
Since corollary~\ref{cor:cf-gives-simplices} implies that $f(\Delta^{2})_{2}=N(\iota)(\Delta^{2})_{2}$,
it follows that the Steenrod-coalgebra morphism, $f$, must send $F_{i}\Delta^{2}$
to $N(\iota)(F_{i}\Delta^{2})$ for $i=0,2$.

Equation~\ref{eq:e1timesdelta2-1} implies that
\[
\im(\xi_{1}(\Delta^{2}))=-F_{1}\Delta^{2}\otimes\Delta^{2}\in N(X)_{1}\otimes\left(N(X)/N(X)_{1}\right)
\]
so that $f(F_{1}\Delta^{2})=N(\iota)(F_{1}\Delta^{2})$ as well.
\end{proof}
We define a complement to the $\cfn *$-functor: 
\begin{defn}
\label{def:fc}Define a functor
\[
\nfc *:\steen\to\dcat
\]
to the category of delta-complexes (see definition~\ref{def:delta-complexes}),
as follows:

If $C\in\steen$, define the $n$-simplices of $\nfc C$ to be the
Steenrod coalgebra morphisms
\[
\ns n\to C
\]
where $\ns n=\cfn{\Delta^{n}}$ is the normalized chain-complex of
the standard $n$-simplex, equipped with the Steenrod coalgebra structure
defined in .

Face-operations are duals of coface-operations
\[
d_{i}:[0,\dots,i-1,i+1,\dots n]\to[0,\dots,n]
\]
with $i=0,\dots,n$ and vertex $i$ in the target is \emph{not} in
the image of $d_{i}$.\end{defn}
\begin{prop}
\label{prop:ux-map}If $X$ is a delta-complex there exists a natural
inclusion
\[
u_{X}:X\to\nfc{\cfn X}
\]
\end{prop}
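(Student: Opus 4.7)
The plan is to define $u_X$ by exploiting the representable nature of simplices in a delta-complex. Viewing $X$ as a functor $\ords^{\mathrm{op}}\to\sets$, the Yoneda lemma identifies an $n$-simplex $\sigma\in X_n$ with a delta-complex morphism $\tilde\sigma:\Delta^n\to X$, where $\Delta^n=\mathrm{Hom}_{\ords}(-,\mathbf{n})$. Functoriality of $\cfn{-}$ produces a chain-map $\cfn{\tilde\sigma}:\ns n\to\cfn X$, and the naturality of the Steenrod diagonal asserted in proposition~\ref{prop:canonical-steenrod-coalgebra} makes $\cfn{\tilde\sigma}$ a Steenrod coalgebra morphism, hence an $n$-simplex of $\nfc{\cfn X}$. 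I would then define
\[
u_X(\sigma)=\cfn{\tilde\sigma}\in\nfc{\cfn X}_n.
\]

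Next I would verify that $u_X$ respects the face operators. Under the Yoneda identification, the $i$-th face of $\sigma$ is represented by $\tilde\sigma\circ d^i$, where $d^i:\Delta^{n-1}\to\Delta^n$ is the $i$-th coface map. By definition~\ref{def:fc}, the face operator on $\nfc{\cfn X}$ is dual to $d^i$, so
\[
d_i\bigl(u_X(\sigma)\bigr)=\cfn{\tilde\sigma}\circ\cfn{d^i}=\cfn{\tilde\sigma\circ d^i}=u_X(d_i\sigma),
\]
as required. Naturality of $u$ in $X$ is then immediate: for a delta-complex map $h:X\to Y$, both compositions in the relevant square send $\sigma$ to $\cfn h\circ\cfn{\tilde\sigma}$, by the functoriality of $\cfn{-}$ together with the fact that $\nfc{\cfn h}$ acts on simplices by post-composition.

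Finally I would argue injectivity. The chain-complex $\ns n=\cfn{\Delta^n}$ has a single top-dimensional generator $\Delta^n$, and by construction $\cfn{\tilde\sigma}$ sends this generator to $\sigma\in\cfn X_n$. Consequently, if $u_X(\sigma)=u_X(\tau)$, evaluating both Steenrod coalgebra morphisms on the generator $\Delta^n$ yields $\sigma=\tau$, so $u_X$ is a monomorphism of delta-complexes.

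There is no substantial obstacle; the argument is essentially a Yoneda-style assembly that patches together the representability of simplices in a delta-complex, the functoriality of $\cfn{-}$, and the naturality of the Steenrod diagonal. The only delicate point is the coherent choice of face-operator convention in definition~\ref{def:fc}, which is precisely what makes the dual-of-coface description on $\nfc{\cfn X}$ agree with the face operators on $X$ along the image of $u_X$.
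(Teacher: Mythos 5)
Your proposal is correct and follows essentially the same route as the paper: each simplex is identified with its characteristic map $\tilde\sigma:\Delta^n\to X$, and $u_X$ sends $\sigma$ to the induced Steenrod coalgebra morphism $\cfn{\tilde\sigma}:\ns n\to\cfn X$, with compatibility with face operators coming from the dual-of-coface convention in definition~\ref{def:fc}. You simply spell out more carefully what the paper leaves implicit --- the Yoneda identification, the face-operator check, and the injectivity of $u_X$ (which the paper does not verify explicitly but which follows, as you note, by evaluating on the top generator).
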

\begin{rem*}
This is also true if $X$ is an arbitrary simplicial set\foreignlanguage{american}{.}\end{rem*}
\begin{proof}
To prove the first statement, note that any simplex $\Delta^{k}$
in $X$ comes equipped with a map
\[
\iota:\Delta^{k}\to X
\]
The corresponding order-preserving map of vertices induces an Steenrod-coalgebra
morphism 
\[
\cfn{\iota}:\cfn{\Delta^{k}}=\ns k\to\cfn X
\]
so $u_{X}$ is defined by
\[
\Delta^{k}\mapsto\cfn{\iota}
\]
It is not hard to see that this operation respects face-operations.
\end{proof}
So, $\nfc{\cfn X}$ naturally contains a copy of $X$. The interesting
question is whether it contains \emph{more} than $X$:
\begin{thm}
\label{thm:simplicial-complexes-determined}If $X\in\dcat$ is a delta-complex
then the canonical inclusion
\[
u_{X}:X\to\nfc{\cfn X}
\]
defined in proposition~\ref{prop:ux-map} is the identity map on
2-skeleta.\end{thm}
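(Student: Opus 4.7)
The plan is to show that $u_X$ is a bijection on $n$-simplices for $n \le 2$. Injectivity is immediate: distinct simplicial maps $\Delta^n \to X$ yield distinct chain maps on $\ns n$. So the content lies in surjectivity --- every Steenrod coalgebra morphism $f: \ns n \to \cfn X$ with $n \le 2$ must factor as $f = \cfn{\iota}$ for some $\iota: \Delta^n \to X$ in $\dcat$, since by proposition~\ref{prop:ux-map} the map $u_X$ is defined by sending $\sigma \in X_n$ to $\cfn{\iota_\sigma}$ for the classifying map $\iota_\sigma$ of $\sigma$.

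For surjectivity I would first identify the image of the top generator. Using proposition~\ref{prop:cx-is-nfx} to rewrite $\cfn X = \cfn{\ds X}$, together with the naturality of the canonical Steenrod coalgebra structure (proposition~\ref{prop:canonical-steenrod-coalgebra}), the morphism $f$ becomes a Steenrod coalgebra morphism from $\ns n$ into the chain complex of a simplicial set. Corollary~\ref{cor:n-simplices-map-to-simplices} then applies and produces a non-degenerate $n$-simplex $\sigma$ of $\ds X$ with $f(\Delta^n) = \sigma$; since degenerate simplices vanish in $N(\ds X) = N(X)$, the simplex $\sigma$ is in fact a simplex of the delta-complex $X$, classified by a unique $\iota_\sigma: \Delta^n \to X$ in $\dcat$.

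Next I would invoke Corollary~\ref{cor:cf-gives-simplices}, which is tailored precisely to the delta-complex setting in dimensions $n \le 2$: given that $f(\Delta^n)$ is the simplex $\sigma$ defined by $\iota_\sigma$, the corollary forces $f = \cfn{\iota_\sigma}$, which is exactly $u_X(\sigma)$. Compatibility with face operators is then automatic, because definition~\ref{def:fc} makes the face maps on $\nfc{\cfn X}$ the duals of the cofaces $d_i$, and these coincide with the delta-complex face maps on $X$ under the identification via the classifying maps $\iota_\sigma$.

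The main obstacle --- and the whole reason the theorem is confined to $2$-skeleta --- is the uniqueness packaged into Corollary~\ref{cor:cf-gives-simplices}. Its proof relies on the explicit formulas~\ref{eq:delta-2-coproduct-1} and~\ref{eq:e1timesdelta2-1}, which together carry just enough terms to rigidly pin down $f$ on every face of $\Delta^2$ once the top cell is known, and in particular to rule out any sign or linear-combination ambiguity that the weaker simplicial-complex arguments of \cite{smith-steenrod2} cannot see in the presence of vertex identifications (loops, degenerate edges, etc.). In higher dimensions this rigidity fails --- more higher-chain-homotopy data than is recorded by the chain-level Steenrod diagonal would be needed --- which is why the restriction to $2$-skeleta is intrinsic. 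Once Corollary~\ref{cor:cf-gives-simplices} is invoked, the remainder of the argument is bookkeeping.
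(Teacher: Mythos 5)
Your proposal is correct and follows essentially the same route as the paper: the paper's proof likewise combines corollary~\ref{cor:n-simplices-map-to-simplices} (the generator must land on a simplex) with corollary~\ref{cor:cf-gives-simplices} (the morphism is then forced to be $\cfn{\iota}$) to get surjectivity of $u_X$ in dimensions $\le 2$. Your extra step of passing through $\ds X$ to justify applying corollary~\ref{cor:n-simplices-map-to-simplices} to a delta-complex is a reasonable piece of added care that the paper elides.
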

\begin{proof}
This follows immediately from corollary~\ref{cor:n-simplices-map-to-simplices},
which implies that simplices map to simplices and corollary~\ref{cor:cf-gives-simplices},
which implies that these maps are \emph{unique. }\end{proof}
\begin{cor}
\label{cor:cellular-determines-pi1}If $X$ and $Y$ are delta-complexes,
any morphism of their canonical Steenrod coalgebras (see~proposition~\ref{prop:canonical-steenrod-coalgebra})
\[
g:\cfn X\to\cfn Y
\]
induces a map
\[
\hat{g}:X_{2}\to Y_{2}
\]
of 2-skeleta. If $g$ is an isomorphism then $X_{2}$ and $Y_{2}$
are isomorphic as delta-complexes.\end{cor}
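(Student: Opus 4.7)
The plan is to push the morphism $g$ through the functor $\nfc{*}:\steen\to\dcat$ of Definition~\ref{def:fc} and then restrict to 2-skeleta, using Theorem~\ref{thm:simplicial-complexes-determined} to identify the 2-skeleton of $\nfc{\cfn X}$ with $X_{2}$ (and similarly for $Y$). So the whole argument is essentially a bookkeeping exercise in functoriality, with all of the work already done in the preceding corollaries.

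First I would verify that the assignment of Definition~\ref{def:fc} is in fact a functor: a Steenrod coalgebra morphism $h:C\to D$ induces $\nfc{h}:\nfc{C}\to\nfc{D}$ by sending a $k$-simplex $\alpha:\ns k\to C$ to the composite $h\circ\alpha:\ns k\to D$, and this respects face-operations because faces on both sides are defined by pre-composition with the coface maps $\ns{k-1}\to\ns k$. Applying this to $g$ yields a delta-complex morphism $\nfc{g}:\nfc{\cfn X}\to\nfc{\cfn Y}$.

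Next, by Theorem~\ref{thm:simplicial-complexes-determined}, the natural inclusions $u_{X}$ and $u_{Y}$ of Proposition~\ref{prop:ux-map} are the identity on 2-skeleta, giving canonical identifications $X_{2}\cong(\nfc{\cfn X})_{2}$ and $Y_{2}\cong(\nfc{\cfn Y})_{2}$. Since any delta-complex morphism preserves simplex dimension, the restriction of $\nfc{g}$ to 2-skeleta transports under these identifications to the desired morphism $\hat{g}:X_{2}\to Y_{2}$ of delta-complexes. For the second statement, if $g$ has a Steenrod coalgebra inverse $g^{-1}:\cfn Y\to\cfn X$, functoriality of $\nfc{*}$ gives $\nfc{g^{-1}}=(\nfc{g})^{-1}$; restricting to 2-skeleta this yields an inverse $\widehat{g^{-1}}$ of $\hat{g}$, so $\hat{g}$ is a delta-complex isomorphism.

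The only genuinely non-trivial ingredient is the 2-skeletal identification furnished by Theorem~\ref{thm:simplicial-complexes-determined}, which has already been proved; conditional on that, the argument is pure functoriality. The real ``main obstacle'' is thus hidden in the earlier theorem and, ultimately, in Corollary~\ref{cor:cf-gives-simplices}: the restriction to $X_{2}$ and $Y_{2}$ comes from the fact that the uniqueness statement ``$f=\cfn{\iota}$'' is established only for $n\le 2$, and pushing past this would require much finer control over the Steenrod coalgebra structure of $\ns n$ for $n\ge 3$ than equations~\eqref{eq:delta-2-coproduct-1} and~\eqref{eq:e1timesdelta2-1} provide.
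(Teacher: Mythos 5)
Your proposal is correct and follows essentially the same route as the paper: apply the functor $\nfc{*}$ to $g$, use Theorem~\ref{thm:simplicial-complexes-determined} to identify the 2-skeleta of $\nfc{\cfn X}$ and $\nfc{\cfn Y}$ with $X_{2}$ and $Y_{2}$, and deduce the isomorphism statement from functoriality. Your explicit verification that $\nfc{*}$ is a functor (precomposition with cofaces) is a detail the paper leaves implicit, but the argument is the same.
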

\begin{proof}
Any morphism $g:\cfn X\to\cfn Y$ induces a morphism of simplicial
sets
\[
\fc g:\nfc{\cfn X}\to\nfc{\cfn Y}
\]
which is an isomorphism (and homeomorphism) of simplicial complexes
if $g$ is an isomorphism. The conclusion follows from theorem~\ref{thm:simplicial-complexes-determined}
which implies that $X_{2}=\fc{\cfn X}_{2}$ and $Y_{2}=\fc{\cfn Y}_{2}$.
\end{proof}
Propositions~\ref{prop:cx-is-nfx} and \ref{prop:homotopy-equiv-sd-ds}
imply that 
\begin{cor}
\label{cor:pi1-simplicial-set}If $X$ and $Y$ are simplicial sets
and $f:C(X)\to C(Y)$ is a morphism of their canonical Steenrod coalgebras
(see proposition~\ref{prop:canonical-steenrod-coalgebra}) over their
unnormalized chain-complexes, then $f$ induces a map 
\[
\hat{f}:X_{2}\to Y_{2}
\]
of 2-skeleta. If $f$ is an isomorphism, then $\hat{f}$ is a homotopy
equivalence.\end{cor}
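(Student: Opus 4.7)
The plan is to reduce the corollary to the delta-complex version already established in Corollary~\ref{cor:cellular-determines-pi1}, and then to pass back to the simplicial-set world via the adjunction~(\ref{eq:ds-sd-adjunction}). By Proposition~\ref{prop:cx-is-nfx} we have $C(X)=N(\sd X)$ and $C(Y)=N(\sd Y)$, so the hypothesized Steenrod coalgebra morphism $f:C(X)\to C(Y)$ is \emph{literally} a Steenrod coalgebra morphism of the normalized chain-complexes of the delta-complexes $\sd X$ and $\sd Y$.

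Applying Corollary~\ref{cor:cellular-determines-pi1} to this data yields a morphism of delta-complexes
\[
\hat g:(\sd X)_2\to(\sd Y)_2,
\]
which is a delta-complex isomorphism whenever $f$ is an isomorphism. Because $\sd$ merely reclassifies degenerate simplices as nondegenerate without altering the underlying graded set of simplices, one has $(\sd X)_2=\sd(X_2)$, and similarly for $Y$. Applying $\ds$ and invoking the counit $g:\ds\circ\sd(-)\to(-)$ of~(\ref{eq:ds-sd-unit}) then produces a zig-zag of simplicial-set maps
\[
X_2\xleftarrow{\,g_{X_2}\,}\ds\sd X_2\xrightarrow{\,\ds\hat g\,}\ds\sd Y_2\xrightarrow{\,g_{Y_2}\,}Y_2,
\]
whose outer arrows are homotopy equivalences by Proposition~\ref{prop:homotopy-equiv-sd-ds}(3). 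The middle arrow is an isomorphism when $f$ is, so this zig-zag represents the desired morphism $\hat f:X_2\to Y_2$ in the homotopy category of simplicial sets, and $\hat f$ is a homotopy equivalence when $f$ is an isomorphism.

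The delicate step I anticipate is verifying that the counit of Proposition~\ref{prop:homotopy-equiv-sd-ds}, which is asserted there only for whole simplicial sets, actually restricts to a homotopy equivalence on the 2-skeleton: a priori, a homotopy equivalence of simplicial sets need not restrict to one on skeleta. The essential point is that every simplex added by $\ds\circ\sd$ is a degeneracy of a simplex already present in $X$, so each such cell collapses in the realization and the relevant homotopy inverses can be produced dimension by dimension. Making this precise --- ideally by exhibiting an explicit deformation retraction of $|\ds\sd X_2|$ onto $|X_2|$ natural enough to slot into the zig-zag above --- is the main technical content I would still need to supply.
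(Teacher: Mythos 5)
Your overall route is the same as the paper's: identify $C(X)$ with $N(\sd X)$ via Proposition~\ref{prop:cx-is-nfx}, apply Corollary~\ref{cor:cellular-determines-pi1} to $\sd X$ and $\sd Y$, and return to simplicial sets with $\ds$ and Proposition~\ref{prop:homotopy-equiv-sd-ds}; the paper's proof is exactly this two-line sketch, and your zig-zag spells out what ``apply $\ds$ and Proposition~\ref{prop:homotopy-equiv-sd-ds}'' has to mean. One step, however, is wrong as stated: the identification $(\sd X)_{2}=\sd(X_{2})$. The $2$-skeleton of the delta-complex $\sd X$ has no simplices above dimension $2$, whereas $\sd(X_{2})$ contains, promoted to nondegenerate status, all the degenerate simplices of the simplicial set $X_{2}$ in every dimension; the two agree only through dimension $2$, so $(\sd X)_{2}$ is the proper sub-delta-complex $(\sd(X_{2}))_{2}\subsetneq\sd(X_{2})$. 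This matters because the map Corollary~\ref{cor:cellular-determines-pi1} hands you lives on $(\sd X)_{2}$, while the counit that Proposition~\ref{prop:homotopy-equiv-sd-ds}(3) controls is $\ds\sd(X_{2})\to X_{2}$.

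Consequently the delicate point is not quite where you put it. If your identification were correct there would be nothing left to check: $X_{2}$ is itself a simplicial set, so Proposition~\ref{prop:homotopy-equiv-sd-ds}(3) applies to it directly and no ``restriction of a homotopy equivalence to skeleta'' is ever needed. What actually remains is to show that the inclusion $(\sd X)_{2}\hookrightarrow\sd(X_{2})$ is a homotopy equivalence of delta-complexes --- that is, that the promoted degenerate simplices of dimension $\ge 3$ can be collapsed, as the remark following Definition~\ref{def:sd-ds-functors} asserts without proof --- after which your zig-zag goes through verbatim. So your instinct that a collapsing argument is still owed is correct, and you should note that the paper's own proof passes over exactly the same point in silence; your write-up is more honest about the gap than the original, but it attributes the gap to the wrong comparison.
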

\begin{proof}
Simply apply corollary~\ref{cor:cellular-determines-pi1} to $\sd(X)$
and $\sd(Y)$ and then apply $\ds$ and proposition~\ref{prop:homotopy-equiv-sd-ds}
to the map
\[
\hat{f}:\sd(X)_{2}\to\sd(Y)_{2}
\]
that results.
\end{proof}
\bibliographystyle{amsplain}

\begin{thebibliography}{1}

\bibitem{maclane:1975}
S.~MacLane, \emph{Homology}, Springer-Verlag, 1995.

\bibitem{rourke-sanderson-delta-complex}
C.~P. Rourke and B.~J. Sanderson, \emph{{$\Delta$}-sets. {I}. {H}omotopy
  theory}, Quart. J. Math. Oxford \textbf{22} (1971), 321--338.

\bibitem{smith-steenrod2}
Justin~R. Smith, \emph{Steenrod coalgebras {II.} {Simplicial complexes}},
  arXiv:1402.3134 [math.AT].

\end{thebibliography}

\providecommand{\bysame}{\leavevmode\hbox to3em{\hrulefill}\thinspace}
\providecommand{\MR}{\relax\ifhmode\unskip\space\fi MR }
\providecommand{\MRhref}[2]{%
  \href{http://www.ams.org/mathscinet-getitem?mr=#1}{#2}
}
\providecommand{\href}[2]{#2}


    \end{document}